\newtheorem{theorem}{Theorem}[section]
\newtheorem{proposition}[theorem]{Proposition}
\newtheorem{corollary}[theorem]{Corollary}
\theoremstyle{definition}
\newtheorem{definition}[theorem]{Definition}
\newtheorem{examples}[theorem]{Examples}
\theoremstyle{remark}
\newcommand{\A}{\mathcal A}
\newcommand{\bea}{\begin{eqnarray*}}
\newcommand{\eea}{\end{eqnarray*}}
\begin{document}
\title {  CHARACTER INNER AMENABILITY OF CERTAIN BANACH ALGEBRAS}
\author{  H.R. EBRAHIMI VISHKI and A.R. KHODDAMI  }
\subjclass[2000]{43A60, 43A07} \keywords{ $\varphi$-amenability,
 $\varphi$-inner amenability,  character inner amenability, tensor product, Lau algebra, triangular Banach algebra,  module extension.}
\address{Department of Pure Mathematics And Centre Of Excellence In Analysis On Algebraic Structures (CEAAS), Ferdowsi  University Of Mashhad, P.O. Box 1159, Mashhad 91775, Iran}
 \email{Vishki@um.ac.ir}
\address{Department of Pure Mathematics, Ferdowsi University of Mashhad, P.O. Box 1159, Mashhad 91775, Iran} \email{khoddami.alireza@yahoo.com}
\begin{abstract}Character inner amenability for  certain class of Banach algebras consist of projective tensor product $A\hat{\otimes}B$, Lau product $A\times_\theta B$ and  module extension $A\oplus X$ are investigated. Some illuminating examples are also included.      \end{abstract}
\maketitle
\section{INTRODUCTION }
The concept of left amenability for a Lau algebra (a predual of a von Neumann  algebra for which the identity of the dual is a multiplicative linear functional, \cite{L})
has been extensively extended  for an arbitrary Banach algebra by introducing the notion of $\varphi-$amenability in  Kaniuth {\em et al.} \cite{KLP}.  A  Banach algebra  $A$ was  called $\varphi$-amenable ($\varphi\in\triangle(A)=$  the spectrum of  $A$)
if there exists a $m\in A^{**}$  satisfying $m(\varphi)=1$ and
$m(f\cdot a)=\varphi(a)m(f)$  $(a\in A,\ f\in A^*)$. $A$ was called character amenable if it is $\varphi$-amenable for each $\varphi\in\triangle(A)$. Many aspects of $\varphi$-amenability have been investigated in \cite{KLP2, M2, HMT}. Recently Jabbari {\it et al.} \cite{JMZ} have introduced the $\varphi-$version of inner amenability.   A Banach algebra $A$ was
  said to be $\varphi$-inner amenable if there exists a $m\in A^{**}$ satisfying
   $m(\varphi)=1$  and   $m(f\cdot a)=m(a\cdot f)$   $(f\in A^*,
  a\in A)$. Such a $m$ will sometimes be referred to as a $\varphi-$inner mean and  $A$ is said to be character inner amenable if and only if $A$ is $\varphi-$inner amenable for every  $\varphi\in\triangle(A).$  As they have  remarked in \cite[Remark 2.4]{JMZ}, this concept considerably generalizes the notion of inner amenability for Lau algebras which  introduced by Nasr-Isfahani \cite{I}. They also  gave several characterizations of $\varphi$-inner amenability. For instance, as in the case of $\varphi-$amenability in  \cite[Theorem 1.4]{KLP}, they have shown that a $\varphi-$inner mean is in fact some  $w^*$-cluster point of a bounded net  $(a_\alpha)$ in $A$ satisfying $\|a_\alpha a-aa_\alpha\|\rightarrow 0,$ for all $a\in A$ and $\varphi(a_\alpha)=1$ for all $\alpha$; \cite[Theorem 2.1]{JMZ}.

  In this paper, we are going to investigate the character inner amenability for certain products of Banach algebras consist of projective tensor product $A\hat{\otimes}B$, Lau product $A\times_\theta B$ and the  module extension $A\oplus X.$ For instance, we show that the projective tensor product   $A\hat{\otimes}B$  is character inner amenable if and only if both $A$ and $B$ enjoy the same property.

\section{Preliminary results and Examples }
Before we proceed for the results we  need a bit preliminaries. The second dual $A^{**}$ of a Banach algebra $A$ can be made into Banach algebra under each of Arens products $\square$ and $\lozenge$ which are defined as follows.
  For $a ,b \in A, \ f\in A^*$  and  $\ m,n \in
  A^{**}$,   $$\langle m\square n,f \rangle=\langle m,n\cdot f \rangle,  \ \langle n\cdot f,a \rangle=\langle n,f\cdot a \rangle,  \ \langle f\cdot a,b \rangle=\langle f,ab \rangle;$$
  and   $$ \langle f,m\lozenge n \rangle=\langle f\cdot m,n \rangle,   \ \langle a,f\cdot m \rangle=\langle a\cdot f,m \rangle,
    \ \langle b,a\cdot f \rangle=\langle ba,f \rangle.$$
  \vskip 0.4 true cm
We commence with  the next definition from \cite{JMZ}.
\begin{definition}
Let $A$ be a Banach algebra and let $\varphi\in\triangle(A)$.  Then $A$ is called
$\varphi$-inner amenable  if there exists a $m\in A^{**}$
 such that   $m(\varphi)=1$    and
 $m\square a=a\square m,  \ (a\in A)$. We call such a $m$  a  $\varphi$-inner mean.
  A Banach algebra $A$ is called character inner amenable if $A$ is $\varphi$-inner amenable for all $\varphi\in \bigtriangleup(A)$.
\end{definition}
 The next straightforward characterization of $\varphi-$inner amenability (see {\cite[Theorem 2.1]{JMZ}} which is inspirited from {\cite[Theorem 1.4 ]{KLP}})  will be frequency used  in the sequel.
\begin{proposition}\label{K}
Let $A$ be a Banach algebra and  $\varphi\in \bigtriangleup(A)$.
Then
the following statement are equivalent. \\
(i)   $A$ is   $\varphi$-inner amenable.\\
(ii)  There exists a bounded net $(a_\alpha)$ in $A$ such that
$\|aa_\alpha-a_\alpha a\|\rightarrow 0$  for all $a\in A$ and
$\varphi(a_\alpha)=1$  for all $\alpha$.\\
(iii) There exists a bounded net $(a_\alpha)$ in $A$ such that
$\|aa_\alpha-a_\alpha a\|\rightarrow 0$  for all $a\in A$ and
$\varphi(a_\alpha)\rightarrow 1$.
\end{proposition}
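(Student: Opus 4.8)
The plan is to establish the cycle $(i)\Rightarrow(ii)\Rightarrow(iii)\Rightarrow(i)$. The implication $(ii)\Rightarrow(iii)$ is immediate, since a net with $\varphi(a_\alpha)=1$ for all $\alpha$ trivially satisfies $\varphi(a_\alpha)\to1$. The two substantive directions are the passage from an abstract $\varphi$-inner mean to a bounded approximating net, and the reverse passage through a weak$^*$-cluster point; throughout I use the module-action identities $(f\cdot a)(b)=f(ab)$ and $(a\cdot f)(b)=f(ba)$ recorded in the Arens definitions, together with the resulting reading $\langle m\square a,f\rangle=m(a\cdot f)$ and $\langle a\square m,f\rangle=m(f\cdot a)$, so that the condition $m\square a=a\square m$ is exactly $m(f\cdot a)=m(a\cdot f)$ for all $a\in A$, $f\in A^*$.

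For $(iii)\Rightarrow(i)$ I would take a bounded net $(a_\alpha)$ as in (iii) and let $m\in A^{**}$ be a weak$^*$-cluster point, which exists by Banach--Alaoglu; passing to a subnet I may assume $a_\alpha\to m$ in the weak$^*$ topology, whence $m(\varphi)=\lim_\alpha\varphi(a_\alpha)=1$. Unwinding the module actions gives $\langle a_\alpha,\,f\cdot a-a\cdot f\rangle=f(aa_\alpha-a_\alpha a)$, whose modulus is at most $\|f\|\,\|aa_\alpha-a_\alpha a\|\to0$. Taking the weak$^*$-limit yields $m(f\cdot a)=m(a\cdot f)$ for all $a\in A$, $f\in A^*$, i.e. $m\square a=a\square m$, so $m$ is a $\varphi$-inner mean and (i) holds.

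The hard direction is $(i)\Rightarrow(ii)$, and the main obstacle is to convert the single weak-type identity carried by $m$ into a bounded net whose commutators vanish in norm \emph{simultaneously} for all $a\in A$ while keeping $\varphi\equiv1$ exactly. Starting from a $\varphi$-inner mean $m$, I would invoke Goldstine's theorem to obtain a bounded net $(b_\beta)$ in $A$ with $b_\beta\to m$ weak$^*$. The computation above, read in the limit, shows that for each fixed $a\in A$ one has $\langle ab_\beta-b_\beta a,\,f\rangle=\langle b_\beta,\,f\cdot a-a\cdot f\rangle\to m(f\cdot a)-m(a\cdot f)=0$, so $ab_\beta-b_\beta a\to0$ in the weak topology of $A$, while $\varphi(b_\beta)\to m(\varphi)=1$. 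Fixing a finite set $F=\{a_1,\dots,a_n\}\subseteq A$ and $\varepsilon>0$ and passing to a tail on which $|\varphi(b_\beta)-1|<\varepsilon$, the vector $(a_1b_\beta-b_\beta a_1,\dots,a_nb_\beta-b_\beta a_n)$ converges weakly to $0$ in $A\oplus\cdots\oplus A$ ($n$ copies); by Mazur's theorem $0$ lies in the norm closure of the convex hull of this net, so some convex combination $c=\sum_j\lambda_jb_{\beta_j}$ satisfies $\|a_ic-ca_i\|<\varepsilon$ for every $i$, together with $|\varphi(c)-1|<\varepsilon$ and $\|c\|\le\sup_\beta\|b_\beta\|$.

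Indexing these $c$ by the directed set of pairs $(F,\varepsilon)$ yields a bounded net $(c_\gamma)$ with $\|ac_\gamma-c_\gamma a\|\to0$ for all $a\in A$ and $\varphi(c_\gamma)\to1$ (this already establishes (iii)). To obtain the normalization in (ii), I would pass to the tail on which $\varphi(c_\gamma)\neq0$ and set $a_\gamma=c_\gamma/\varphi(c_\gamma)$; then $\varphi(a_\gamma)=1$ for all $\gamma$, the bound persists because $|\varphi(c_\gamma)|\to1$, and $\|aa_\gamma-a_\gamma a\|=\|ac_\gamma-c_\gamma a\|/|\varphi(c_\gamma)|\to0$. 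This produces the net required by (ii) and closes the cycle. The only delicate points are checking that the truncated tails of $(b_\beta)$ still cluster weakly at $0$, so that Mazur applies on the tail, and verifying that the Arens identities deliver the commutator formula; both are routine once the module-action definitions are substituted.
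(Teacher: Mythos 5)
Your proof is correct. Note, however, that the paper does not actually prove this proposition: it is quoted from \cite[Theorem 2.1]{JMZ} (itself modeled on \cite[Theorem 1.4]{KLP}), so there is no in-paper argument to compare against; your chain --- Goldstine's theorem to get a bounded weak$^*$-approximating net, Mazur's theorem in a finite direct sum of copies of $A$ to upgrade weak convergence of the commutators $ab_\beta-b_\beta a$ to norm-small convex combinations, normalization by $\varphi(c_\gamma)$ to pass from the condition in (iii) to the exact normalization $\varphi(a_\gamma)=1$ in (ii), and a weak$^*$-cluster point for (iii)$\Rightarrow$(i) --- is precisely the standard argument behind the cited results.
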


\begin{examples}\label{H}
$(i)$  Every Banach algebra with a bounded  approximate identity
$(e_\alpha)$  is character inner amenable. Indeed,  one can verify that $\|ae_\alpha-e_\alpha a\|\rightarrow 0$ and $\varphi(e_\alpha)\rightarrow 1$, for each $\varphi\in\triangle(A)$.

$(ii)$  Every commutative Banach algebra  is character inner amenable.

$(iii)$   Let $A=\left\{  \left(
                        \begin{array}{cc}
                          0 & a \\
                          0 & b \\
                        \end{array}
                      \right):     a, b\in \mathbb{C}    \right
                      \}$ and define  $\varphi :A\rightarrow\mathbb{C}$
                      by $\varphi(\left(
                                            \begin{array}{cc}
                                              0 & a \\
                                              0 & b \\
                                            \end{array}
                                          \right))=b$. A direct verification reveals that there is no
                                          bounded net $(a_\alpha)$ in $A$ satisfying Proposition \ref{K}.
                                          Therefore $A$ is not
                                          $\varphi$-inner
                                          amenable.

$(iv)$ Given a Banach space $A$ and fix a non-zero $\varphi\in A^*$ with $\|\varphi\|\leq 1$. Then the product $a\cdot b=\varphi(a)b$ turning $A$ into a Banach algebra with $\triangle(A)=\{\varphi\}$. Trivially $A$ has a left identity (indeed, every $e\in A$ with $\varphi(e)=1$ is a left identity), while it has no bounded approximate identity in the case where $\dim(A)>1.$ In this case $A$ is not $\varphi$-inner amenable. Indeed, if  $m$ is  a $\varphi$-inner mean for $A$ then  $ m(\varphi)=1$ and $m\square a=a\square m$ for all $a\in A$. But a simple calculation reveals that $m\square a=m(\varphi)a$ and  $a\square m=\varphi(a)m.$  Therefore $a=\varphi(a)m$ for each $a\in A$, that is, $\dim(A)=1.$

$(v)$ Let  $A$ be the Banach algebra posed in $(iv)$ which is  generated by two elements $a$ and $b$, that is, $\dim(A)=2$ and let $\varphi\in A^*$ be so that $\varphi(a)=1$ and $\varphi(b)=0$. If $I$ is the subspace generated by $b$ then  $I$ is a closed ideal  for which $I$ and $A/I$ are character inner amenable; however, $A$ itself is not.

 Note that, as {\cite[Theorem 2.8]{JMZ}} demonstrates, if $A$ is character inner amenable then so is $A/I$ for each closed ideal $I$ of $A$. However, $I$ may not be character inner amenable; for example, the unitization of a non-character inner amenable Banach algebra is character inner amenable.

$(vi)$ For a reflexive Banach algebra $A$ with  $\varphi\in\triangle(A)$ it is easy to verify that $A$ is $\varphi$-inner amenable  if and only if $Z(A)\cap
(A-ker\varphi)\neq\emptyset$, where $Z(A)$ is the algebraic center of $A$.
\end{examples}

\section{Projective  tensor product $A\hat{\otimes}B$}
Let $A\hat{\otimes}B$ be the projective tensor product of two Banach
algebras $A$ and $B$.  For    $f\in A^*$  and   $ g\in B^* $,  let
$f\otimes g $ denote the element of
  $(A\hat{\otimes }B)^*$  satisfying,  $(f\otimes
  g)(a\otimes b)=f(a)g(b)$   $(a\in A,  b\in B)$. Recall
  that,

  $$\bigtriangleup (A\hat{\otimes} B)=\{  \varphi\otimes\psi,    \varphi\in \triangle(A),   \psi\in \triangle(B)  \}.$$
  In the next result, as in the case of character amenability in  {\cite[Theorem 3.3]{KLP}, we investigate  the character inner amenability of $A\hat{\otimes}B.$ It is worthwhile mentioning that our method of proof provides an alternative proof for {\cite[Theorem 3.3]{KLP} which does not rely on derivation techniques.
\begin{theorem}
Let $A$ and $B$ be Banach algebras and let $\varphi\in
\triangle(A)$, $\psi\in \triangle(B)$. Then
  $A\hat{\otimes}B$ is $(\varphi\otimes\psi)$-inner amenable if and
only if $A$  is $\varphi$-inner amenable and  $B$  is   $\psi$-inner
amenable. In particular,    $A\hat{\otimes}B$ is character inner
amenable if and only if both  $A$ and $B$ are character inner
amenable.
\end{theorem}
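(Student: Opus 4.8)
The plan is to prove both directions using the net characterization in Proposition~\ref{K}, which reduces everything to manipulating bounded nets of commutators rather than working with means in the bidual directly. The central observation I would exploit is that $\triangle(A\hat{\otimes}B)$ consists exactly of the elementary characters $\varphi\otimes\psi$, and that the multiplication on $A\hat{\otimes}B$ satisfies $(a\otimes b)(a'\otimes b')=(aa')\otimes(bb')$, so a commutator $(a\otimes b)(a_\alpha\otimes b_\alpha)-(a_\alpha\otimes b_\alpha)(a\otimes b)$ decomposes naturally in terms of the commutators of the two factor algebras. The projective cross-norm inequality $\|x\otimes y\|\le\|x\|\,\|y\|$ is what will let me control these products.

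For the ``if'' direction, suppose $A$ is $\varphi$-inner amenable and $B$ is $\psi$-inner amenable. By Proposition~\ref{K}(ii) I get bounded nets $(a_\alpha)$ in $A$ and $(b_\beta)$ in $B$ with $\|aa_\alpha-a_\alpha a\|\to0$, $\varphi(a_\alpha)=1$, and $\|bb_\beta-b_\beta b\|\to0$, $\varphi(b_\beta)=1$. I would form the net $(a_\alpha\otimes b_\beta)$ indexed by the product directed set, which is bounded with $(\varphi\otimes\psi)(a_\alpha\otimes b_\beta)=1$. To verify the asymptotic commutation condition, I would first check it on elementary tensors $a\otimes b$ by adding and subtracting a cross term: writing
$$
(a\otimes b)(a_\alpha\otimes b_\beta)-(a_\alpha\otimes b_\beta)(a\otimes b)=(aa_\alpha-a_\alpha a)\otimes(bb_\beta)+(a_\alpha a)\otimes(bb_\beta-b_\beta b),
$$
and bounding each summand by the cross-norm estimate so that both go to zero. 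Since finite sums of elementary tensors are dense in $A\hat{\otimes}B$ and the net is bounded, a standard $\varepsilon/3$ argument extends the commutation estimate to all of $A\hat{\otimes}B$, giving $(\varphi\otimes\psi)$-inner amenability.

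For the ``only if'' direction, suppose $A\hat{\otimes}B$ is $(\varphi\otimes\psi)$-inner amenable and pick the corresponding bounded net $(c_\gamma)$ in $A\hat{\otimes}B$ from Proposition~\ref{K}. I would push this net forward to $A$ and to $B$ using the algebra homomorphisms obtained by slicing: applying $\mathrm{id}_A\otimes\psi$ (the slice map $A\hat{\otimes}B\to A$ induced by $\psi\in\triangle(B)$) sends $c_\gamma$ to a bounded net in $A$, and I would verify that this slice map is an algebra homomorphism carrying the $(\varphi\otimes\psi)$-commutation data to the $\varphi$-commutation data, with the normalization $\varphi(\mathrm{id}_A\otimes\psi)(c_\gamma)=(\varphi\otimes\psi)(c_\gamma)\to1$; Proposition~\ref{K}(iii) then yields $\varphi$-inner amenability of $A$, and symmetrically for $B$. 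The main obstacle I anticipate is exactly here: confirming that the slice maps are genuine algebra homomorphisms and that they transport the commutator smallness correctly, since the projective tensor norm's behaviour under slicing must be checked so that $\|ac'_\gamma-c'_\gamma a\|\to0$ really follows from $\|(a\otimes e)c_\gamma-c_\gamma(a\otimes e)\|\to0$. Once both directions are established, the ``in particular'' statement about character inner amenability follows immediately by quantifying over all $\varphi\in\triangle(A)$ and $\psi\in\triangle(B)$, using the description of $\triangle(A\hat{\otimes}B)$.
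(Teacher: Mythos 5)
Your proof is correct, but only half of it follows the paper's route. For the sufficiency direction your argument is essentially the paper's own: the same bounded net $(a_\alpha\otimes b_\beta)$ over the product directed set, the same commutator decomposition $(aa_\alpha-a_\alpha a)\otimes bb_\beta + a_\alpha a\otimes(bb_\beta-b_\beta b)$ on elementary tensors, and the same boundedness-plus-density passage to all of $A\hat{\otimes}B$ (the paper implements your $\varepsilon/3$ step by truncating a representation $w=\sum_j c_j\otimes d_j$ and estimating the tail; note also your small typo $\varphi(b_\beta)=1$ where $\psi(b_\beta)=1$ is meant). For the necessity direction you genuinely diverge: the paper never returns to nets, but takes the $(\varphi\otimes\psi)$-inner mean $m\in(A\hat{\otimes}B)^{**}$ and defines $m_\varphi(f)=m(f\otimes\psi)$, verifying $m_\varphi(f\cdot a)=m_\varphi(a\cdot f)$ by a direct computation with an auxiliary $b_0\in B$, $\psi(b_0)=1$, and the identities $(f\otimes\psi)\cdot(a\otimes b_0)=f\cdot a\otimes\psi$ and $(a\otimes b_0)\cdot(f\otimes\psi)=a\cdot f\otimes\psi$. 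Your slice-map argument is precisely the net-level dual of this: the paper's $m_\varphi$ is exactly $R_\psi^{**}(m)$ for your slice map $R_\psi=\mathrm{id}_A\otimes\psi$, so neither proof is stronger, they are two implementations of one slicing idea. What yours buys is that it stays entirely within Proposition~\ref{K} and isolates a reusable principle: a bounded surjective algebra homomorphism with compatible characters carries commutator-vanishing nets to commutator-vanishing nets (the quotient statement \cite[Theorem 2.8]{JMZ} invoked in the Examples is another instance of it). What it costs is the one verification you correctly flag: that $\mathrm{id}_A\otimes\psi$ is well defined, bounded and multiplicative on the completed projective tensor product. This is standard --- since $\|\psi\|\le 1$, the cross-norm estimate gives boundedness on finite sums of elementary tensors and hence extension to the completion, multiplicativity passes from elementary tensors by continuity, surjectivity follows from $R_\psi(a\otimes b_0)=a$, and $\varphi\circ R_\psi=\varphi\otimes\psi$ gives the normalization --- so the obstacle you anticipate is not a genuine gap, and your proof goes through.
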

\begin{proof}
  Let $m\in (A\hat{\otimes }B)^{**}$  be a
$\varphi\otimes\psi$-inner mean. So $m(\varphi\otimes\psi)=1$ and
$$m((f\otimes\psi)\cdot (a\otimes b))=m((a\otimes b)\cdot (f\otimes\psi)), \
(f\in A^*,  a\in A, \  b\in B).$$  Define $m_\varphi:A^*\rightarrow
 \mathbb{C}$  such that  $m_\varphi(f)=m(f\otimes\psi)$.  So
$ m_\varphi(\varphi)=m(\varphi\otimes\psi)=1$. Choose $ b_0\in B$
such that $\psi(b_0)=1$  and  let $f\in A^*$  and   $a\in A$. So
\begin{eqnarray*}
m_\varphi(f\cdot a)&=&m(f\cdot a\otimes\psi)=m(f\cdot a\otimes\psi\cdot b_0)\\
&=& m((f\otimes\psi)\cdot (a\otimes b_0))=m((a\otimes
b_0)\cdot (f\otimes\psi))\\
&=&m(a\cdot f\otimes b_0\cdot \psi)=m(a\cdot f\otimes\psi)\\
 &=&m_\varphi(a\cdot f).
\end{eqnarray*}
It follows that $A$ is
$\varphi$-inner amenable, and similarly $B$ is $\psi$-inner amenable.

For the converse let $A$ be $\varphi$-inner amenable and $B$
$\psi$-inner amenable. Then  there exist bounded nets
$(a_\alpha)$  in $A$  and $(b_\beta)$  in $B$ such that
$\varphi(a_\alpha)=1$, $\|aa_\alpha -a_\alpha a\|\rightarrow 0 $,
$(a\in A)$  and $\psi(b_\beta)=1$,  $\| bb_\beta-b_\beta b
\|\rightarrow 0$, $(b\in B)$.  Consider the bounded net
$(a_\alpha\otimes b_\beta)$ in $A\hat{\otimes}
B$.  So $(\varphi\otimes\psi)(a_\alpha\otimes
b_\beta)=\varphi(a_\alpha)\psi(b_\beta)=1$.  Now let $\| a_\alpha\|$
$\leq M_1$, $\| b_\beta \| \leq M_2 $ and let
$F=\displaystyle{\sum_{j=1}^{N}  c_j\otimes d_j} \in A\otimes
B$.
\begin{eqnarray*}
\| F(a_\alpha\otimes b_\beta)- (a_\alpha\otimes b_\beta )F
\|&=&\|   \displaystyle{\sum_{j=1}^{N}[(c_ja_\alpha -a_\alpha
c_j)\otimes d_jb_\beta +a_\alpha c_j\otimes(d_jb_\beta -b_\beta
d_j)] }  \| \\
 &\leq& \displaystyle{\sum_{j=1}^{N} M_2\|d_j\|\|
c_ja_\alpha-a_\alpha c_j\|}+\displaystyle{\sum_{j=1}^{N} M_1\|
c_j\|\|d_jb_\beta-b_\beta d_j\|}.
\end{eqnarray*}
 Since
$\| c_ja_\alpha-a_\alpha c_j\|\rightarrow 0$ and  $\|d_jb_\beta-b_\beta d_j\| \rightarrow 0, (1\leq j\leq N),$\\
so   $\| F(a_\alpha\otimes
b_\beta)-(a_\alpha\otimes b_\beta)F\|\rightarrow 0$.

 Now
 let $w\in A\hat{\otimes}B$,  so there exist
sequences $ \{c_j\}\subseteq A $ and $ \{d_j\}\subseteq B $  such
that $w=\displaystyle{\sum_{j=1}^{\infty}  c_j\otimes}d_j $ with
$\displaystyle{\sum_{j=1}^{\infty}  \| c_j\|\| d_j\|}<\infty$.
Let $ \epsilon> 0$  be given,  we choose  $N\in \mathbb{N}$ such that
$\displaystyle{\sum_{j=N+1}^{\infty}\|
c_j\|\|d_j\|}<\epsilon/4M_1M_2$. Put
$F=\displaystyle{\sum_{j=1}^{N} c_j\otimes d_j}$. As
$\|F(a_\alpha\otimes b_\beta)-(a_\alpha\otimes
b_\beta)F\|\rightarrow 0 $, so there exists  $(\alpha_0,\beta_0)$
such that   $\|F(a_\alpha\otimes b_\beta)-(a_\alpha\otimes
b_\beta)F\|<\epsilon/2$ for all $(\alpha,\beta)\geq
(\alpha_0,\beta_0)$.\\ Now for such a $(\alpha,\beta)$,
\begin{eqnarray*}
\|w(a_\alpha\otimes b_\beta)-(a_\alpha\otimes
b_\beta)w\|&=&\|F(a_\alpha\otimes b_\beta)-(a_\alpha\otimes
b_\beta)F+\displaystyle{\sum_{j=N+1}^{\infty}[c_ja_\alpha\otimes
d_jb_\beta -a_\alpha c_j\otimes b_\beta
d_j]}\|\\
&\leq&\|F(a_\alpha\otimes b_\beta)-(a_\alpha\otimes
b_\beta)F\|+2M_1M_2\displaystyle{\sum_{j=N+1}^{\infty}\|c_j\|\|d_j\|}\\
&<&\epsilon/2+2M_1M_2.\epsilon/4M_1M_2\\
&=&\epsilon.
\end{eqnarray*}
 Hence $\|w(a_\alpha\otimes
 b_\beta)-(a_\alpha\otimes b_\beta)w\|\rightarrow 0$. Applying Proposition \ref{K} shows  that
$A\hat{\otimes}B$ is $(\varphi\otimes\psi)$-inner amenable.
\end{proof}
\section{The Lau product $A\times_\theta B$}
Let $A$ and $B$ be two Banach algebras  with $\triangle
(B)\neq\emptyset.$
  For a    $\theta\in \triangle (B)$ the $\theta$-Lau product
  $A\times_\theta B$  is  defined as the  cartesian product  $A\times
  B$  with the algebra multiplication  \\
  $(a,b)\cdot (c,d)=(ac+\theta(d)a+\theta(b)c,bd)$ and with the norm  $\|(a,b)\|=\|a\|+\|b\|$. \\  This  product was first  introduced by Lau \cite{L} for Lau algebras and followed by Sangani Monfared \cite{M} for the general case.
 $A\times _\theta B$  is a  Banach algebra and it is shown in {\cite[Proposition 2.4]{M}} that $$ \triangle (A\times_\theta
 B)=(\bigtriangleup(A)\times\{\theta\})\cup(\{0\}\times\bigtriangleup(B)).$$
 In a natural way the dual space  $(A\times_\theta B)^* $ can be
 identified with $A^*\times B^*$ via
 $(f,g)((a,b))=f(a)+g(b)$.  Recall that  the dual norm on    $A^*\times
 B^*$  is  $\|(f,g)\|=max \{  \|f\|  , \|g\|   \}$.
 Also  if  $A^{**}  ,B^{**}$   and   $(A\times_\theta B)^{**}$  are
 equipped with their first    Arens   products then    $(A\times_\theta B)^{**}=A^{**}\times_\theta
 B^{**}$ as an isometric isomorphism.  Also for  $(m,n),(p,q)\in (A\times_\theta B)^{**}$  we have
$(m,n)\square(p,q)=(m\square p+n(\theta)p+q(\theta)m,n\square
q)$; see {\cite[Proposition 2.12]{M}}.

The next result, which  extends  {\cite[Proposition 4.2]{I}}, studies character inner amenability of $A\times_\theta B$.
\begin{theorem}
Let $\varphi\in \bigtriangleup(A)$ and $\psi\in \bigtriangleup(B)$.
Then \\
(i) $A\times_\theta B$ is $(\varphi,\theta)$-inner amenable if and
only if either $A$ is $\varphi$-inner amenable or $B$ is $\theta$-inner
amenable.\\
(ii) $A\times_\theta B$  is  $(0,\psi)$-inner amenable
 if and only if  $B$  is   $\psi$-inner amenable.\\
(iii)  $A\times_\theta B$ is character inner amenable if and only if
$B$ is character inner amenable.\\
\end{theorem}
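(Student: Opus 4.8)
The plan is to reduce everything to the net characterization of Proposition \ref{K} and to exploit the fact that the commutator in $A\times_\theta B$ decouples across the two factors. A direct computation from the multiplication rule gives, for any $(x,y),(c,d)\in A\times_\theta B$,
\[
(x,y)\cdot(c,d)-(c,d)\cdot(x,y)=(xc-cx,\,yd-dy),
\]
since the cross terms $\theta(d)x+\theta(y)c$ appearing in each product are symmetric in the two factors and therefore cancel. Combined with the norm $\|(x,y)\|=\|x\|+\|y\|$, this shows that a bounded net $(a_\alpha,b_\alpha)$ is asymptotically central in $A\times_\theta B$ precisely when $\|ca_\alpha-a_\alpha c\|\to 0$ for all $c\in A$ and $\|db_\alpha-b_\alpha d\|\to 0$ for all $d\in B$; that is, the two coordinates behave independently.

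For the converse directions of (i) and (ii), which I would treat first as the easier half, the idea is to lift a witnessing net from a single factor. If $A$ is $\varphi$-inner amenable with bounded net $(a_\alpha)$ satisfying $\varphi(a_\alpha)=1$ and $\|ca_\alpha-a_\alpha c\|\to 0$, then the net $(a_\alpha,0)$ is bounded, asymptotically central by the commutator formula, and satisfies $(\varphi,\theta)(a_\alpha,0)=\varphi(a_\alpha)=1$; Proposition \ref{K} then yields $(\varphi,\theta)$-inner amenability. Symmetrically, a net $(b_\beta)$ witnessing $\theta$-inner amenability (resp. $\psi$-inner amenability) of $B$ lifts to $(0,b_\beta)$, giving $(\varphi,\theta)$-inner amenability (resp. $(0,\psi)$-inner amenability) of $A\times_\theta B$.

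The forward direction of (i) is where the only real subtlety lies. Given a bounded asymptotically central net $(a_\alpha,b_\alpha)$ with $(\varphi,\theta)(a_\alpha,b_\alpha)=\varphi(a_\alpha)+\theta(b_\alpha)=1$, the scalars $\varphi(a_\alpha)$ and $\theta(b_\alpha)$ are bounded, so after passing to a subnet I may assume $\varphi(a_\alpha)\to\lambda$ and $\theta(b_\alpha)\to 1-\lambda$. Since these limits sum to $1$, at least one is nonzero. If $\lambda\neq 0$, rescaling produces the bounded net $(\lambda^{-1}a_\alpha)$ in $A$, which remains asymptotically central and satisfies $\varphi(\lambda^{-1}a_\alpha)\to 1$, so $A$ is $\varphi$-inner amenable by the limiting form (iii) of Proposition \ref{K}; if instead $1-\lambda\neq 0$, the same argument applied to $((1-\lambda)^{-1}b_\alpha)$ shows $B$ is $\theta$-inner amenable. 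The forward direction of (ii) is immediate: the defining equation $(0,\psi)(a_\alpha,b_\alpha)=\psi(b_\alpha)=1$ shows that the bounded, asymptotically central net $(b_\alpha)$ already witnesses $\psi$-inner amenability of $B$.

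Finally, (iii) follows formally from (i) and (ii) once one recalls that $\triangle(A\times_\theta B)=(\triangle(A)\times\{\theta\})\cup(\{0\}\times\triangle(B))$ and that $\theta\in\triangle(B)$. If $A\times_\theta B$ is character inner amenable, then it is in particular $(0,\psi)$-inner amenable for every $\psi\in\triangle(B)$, so $B$ is character inner amenable by (ii). Conversely, if $B$ is character inner amenable, then it is $\psi$-inner amenable for every $\psi$, giving $(0,\psi)$-inner amenability by (ii); moreover $B$ is $\theta$-inner amenable, so the disjunction in (i) is satisfied for every $\varphi\in\triangle(A)$, yielding $(\varphi,\theta)$-inner amenability for all such $\varphi$. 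Thus every character in $\triangle(A\times_\theta B)$ admits an inner mean, and the main obstacle throughout is simply the rescaling argument in part (i).
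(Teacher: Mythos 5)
Your proof is correct, but it takes a genuinely different route from the paper's. The paper works entirely in the second dual: it invokes the identification $(A\times_\theta B)^{**}=A^{**}\times_\theta B^{**}$ together with the Arens product formula $(m,n)\square(p,q)=(m\square p+n(\theta)p+q(\theta)m,\,n\square q)$ (both quoted from Sangani Monfared), observes that the cross terms cancel in $(m,n)\square(a,b)=(a,b)\square(m,n)$ --- the bidual analogue of your commutator computation --- so that the hypothesis decouples into $m(\varphi)+n(\theta)=1$, $m\square a=a\square m$ and $n\square b=b\square n$, and then splits into the cases $n(\theta)=0$ (whence $m(\varphi)=1$ and $m$ itself is a $\varphi$-inner mean for $A$) and $n(\theta)\neq 0$ (whence $n/n(\theta)$ is a $\theta$-inner mean for $B$). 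You instead stay at the level of the algebra, using the net characterization of Proposition \ref{K} and the decoupled commutator identity $(x,y)(c,d)-(c,d)(x,y)=(xc-cx,\,yd-dy)$. What your approach buys: it is self-contained, avoiding the second-dual structure theory of the Lau product altogether, and it makes the lifting step explicit (the paper's ``trivially $(m,0)$ is a $(\varphi,\theta)$-inner mean'' also silently uses the Arens formula). What the paper's approach buys: a cleaner dichotomy, since the mean hands you the scalars $m(\varphi)$ and $n(\theta)$ outright, so no passage to a subnet and no appeal to the limiting form (iii) of Proposition \ref{K} is needed, whereas your rescaling argument requires both. At bottom the two proofs share the same two ideas --- cancellation of the $\theta$-cross terms in commutators, and a rescaling case analysis driven by the constraint that the two scalars sum to $1$ --- executed once in the bidual and once with nets.
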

\begin{proof}
(i) Let $A\times_\theta B$ be $(\varphi,\theta)$-inner amenable. Then
there exists $(m,n)\in \A^{**}\times_\theta B^{**}$ such that
$(m,n)((\varphi,\theta))=1$ and
$(m,n)\square(a,b)=(a,b)\square(m,n),$ for all $(a,b)\in
A\times_\theta B$. It follows that $m(\varphi)+n(\theta)=1,$
$m\square a=a\square m$ and  $n\square b=b\square n$  for all
$a\in A $ and $b\in B$. Now if $n(\theta)=0 $ then $m(\varphi)=1$
and so $m$ is a $\varphi$-inner mean for $A$. If $n(\theta)\neq 0$  then
$\frac{n}{n(\theta)}\square b=b\square \frac{n}{n(\theta)}$, that is,
$\frac{n}{n(\theta)}$ is a $\theta$-inner mean for $B.$ \\
For the converse, suppose that $m$ is a $\varphi$-inner mean for  $A$ then trivially $(m,0)$ is a $(\varphi,\theta)$-inner mean  for $A\times_\theta B.$ The same argument needs for the case that $B$ is a $\psi-$inner amenable. $(ii)$ needs a similar proof and $(iii)$ follows trivially from $(i)$ and $(ii)$.
\end{proof}
Now we turn our attention to the question of character inner amenability of the Banach algebras $A\oplus_\infty B$ and  $A\oplus_p B.$ Recall that these are equipped with the usual direct product multiplications and the norms $\|(a,b)\|=max\left\{\|a\|,\|b\| \right \}$ and
  $\|(a,b)\|=(\|a\|^p+\|b\|^p)^\frac{1}{p},$  respectively.
  A direct verification shows that
 $$\bigtriangleup (A\oplus_p B)=(\bigtriangleup(A)\times \left
\{0\right\})\cup(\left\{0 \right\}\times\bigtriangleup(B)),  \ 1\leq p
\leq \infty;$$ from which we get the next result.
\begin{proposition}
Let $A$ and $B$ be Banach algebras and let $1\leq p\leq\infty$. Then
$A\oplus_p B$ is character inner amenable if and only if both $A$
and $B$ are character inner amenable.
\end{proposition}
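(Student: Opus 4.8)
The plan is to reduce everything to the net characterization of $\varphi$-inner amenability in Proposition \ref{K}, exploiting the fact that the multiplication on $A\oplus_p B$ is coordinatewise and that the norm $\|(x,y)\|$ dominates each of $\|x\|$ and $\|y\|$ for every $1\le p\le\infty$ (including the endpoint $p=\infty$, where $\|(x,y)\|=\max\{\|x\|,\|y\|\}$). First I would record the key computation: for $(a,b)$ in $A\oplus_p B$ and any pair $(u,v)$ one has $(a,b)(u,v)-(u,v)(a,b)=(au-ua,\,bv-vb)$, so that $\|au-ua\|\le\|(a,b)(u,v)-(u,v)(a,b)\|$ and likewise for the second coordinate. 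This single observation drives both implications.

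For the forward direction, suppose $A\oplus_p B$ is character inner amenable and fix $\varphi\in\triangle(A)$. Since $(\varphi,0)\in\triangle(A\oplus_p B)$ by the description recalled above, Proposition \ref{K} supplies a bounded net $\big((a_\alpha,b_\alpha)\big)$ in $A\oplus_p B$ with $\|(a,b)(a_\alpha,b_\alpha)-(a_\alpha,b_\alpha)(a,b)\|\to0$ for all $(a,b)$ and $(\varphi,0)(a_\alpha,b_\alpha)=\varphi(a_\alpha)=1$. Testing the commutator condition against elements of the form $(a,0)$ and using the norm estimate yields $\|aa_\alpha-a_\alpha a\|\to0$ for every $a\in A$, while $\|a_\alpha\|\le\|(a_\alpha,b_\alpha)\|$ keeps $(a_\alpha)$ bounded. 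Hence $(a_\alpha)$ witnesses $\varphi$-inner amenability of $A$ via Proposition \ref{K}. The symmetric argument with the characters $(0,\psi)$, $\psi\in\triangle(B)$, gives character inner amenability of $B$.

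For the converse, assume both $A$ and $B$ are character inner amenable and take an arbitrary character of $A\oplus_p B$. By the structure of $\triangle(A\oplus_p B)$ it is either $(\varphi,0)$ or $(0,\psi)$; I consider the first case, the second being identical. Choosing a bounded $\varphi$-inner net $(a_\alpha)$ in $A$, I would simply pad it with zeros and pass to $\big((a_\alpha,0)\big)$ in $A\oplus_p B$: coordinatewise multiplication gives commutators $(aa_\alpha-a_\alpha a,0)$, whose norms tend to $0$, the net stays bounded, and $(\varphi,0)(a_\alpha,0)=\varphi(a_\alpha)=1$. Proposition \ref{K} then shows $A\oplus_p B$ is $(\varphi,0)$-inner amenable, and running both cases over all characters yields character inner amenability of the direct sum.

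I do not anticipate a genuine obstacle here; the content is entirely in the two bookkeeping facts above. The only point that deserves care is the uniform treatment of the norm inequality $\|x\|,\|y\|\le\|(x,y)\|$ across the whole range $1\le p\le\infty$, so that neither direction needs a separate argument for the endpoint $p=\infty$.
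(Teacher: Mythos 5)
Your proof is correct and follows essentially the same route the paper intends: the paper derives this proposition directly from the description of $\bigtriangleup(A\oplus_p B)$ as $(\bigtriangleup(A)\times\{0\})\cup(\{0\}\times\bigtriangleup(B))$ together with the net characterization of Proposition \ref{K}, leaving the coordinatewise verification implicit, and your argument simply fills in those routine details (projecting the net onto a coordinate for one direction, padding with zeros for the other).
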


\section{ module extension  and triangular Banach algebras}

For a Banach algebra $A$   and  a Banach $A$-module  $X$ let   $A\oplus
X$  be   the module extension Banach algebra which is equipped with the
  algebra product    $(a,x)\cdot (b,y)=(ab,ay+xb),   ( a,b\in A,     x,y\in X)$ and the norm $\|(a, x)\|=\|a\|+\|x\|.$
  The second dual $(A\oplus X)^{**}$ can be identified with $A^{**}\oplus_1
X^{**}$ as a Banach space, and it is not difficult to verify that the first Arens product on $(A\oplus X)^{**}$ is given by $(m,\lambda)\square(n,\mu)=(m\square   n,m\mu+\lambda n)$. Some aspects of the module extension  Banach algebras have been discussed in \cite{Z}.

   Let $A$ and $B$ be Banach algebras and let $X$ be a Banach $A,B$-module; that is, a left $A-$module and a right $B-$module satisfying     \ $\|axb\|\leq \|a\|\|x\|\|b\|$,  $(a\in A, \ b\in B$ \ $x\in X)$. The corresponding triangular Banach algebra $$\tau =\left\{\left(
                                             \begin{array}{cc}
                                                                 a & x \\
                                                                 0 & b \\
                                                               \end{array}
                                                             \right):  \ a\in A,  x\in X,  b\in B\right\}.$$\\
 is equipped with the usual $2\times2-$matrix operations and the norm $\|\left(
         \begin{array}{cc}
           a & x \\
           0 & b \\
         \end{array}
       \right)\|=\|a\|+\|x\|+\|b\|.$ The Arens products on the second dual of $\tau$ is studied in \cite{FM}.
Recall that the class of module extension Banach algebras includes the triangular Banach algebras. Indeed, $\tau$ can be identified with the module extension  $(A\oplus_1B)\oplus X;$ in which $X$ is considered as a $A\oplus_1B$-module under the operations $(a,b)\cdot x=ax$ and  $x\cdot(a,b)=xb.$
\begin{proposition}
Let $A$ be a Banach algebra and $X$ be a Banach $A$-module. Then for
the module extension Banach algebra $A\oplus X, \ \triangle(A\oplus X)=\bigtriangleup(A)\times\{0\}$.  In particular,  for the
triangular Banach algebra $\tau,$
$\bigtriangleup(\tau)=\triangle(A\oplus_1 B)\times\{0\}.$
\end{proposition}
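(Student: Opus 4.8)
The plan is to read off $\triangle(A\oplus X)$ directly from the definition of a character. Since $(A\oplus X)^*$ is isometrically $A^*\times X^*$, every non-zero $\Phi\in\triangle(A\oplus X)$ has the form $\Phi(a,x)=\varphi(a)+\xi(x)$ for some $\varphi\in A^*$ and $\xi\in X^*$. Because $(a,x)(b,y)=(ab,ay+xb)$, the whole argument reduces to feeding well-chosen elements into the multiplicativity identity
\[
\varphi(ab)+\xi(ay+xb)=\bigl(\varphi(a)+\xi(x)\bigr)\bigl(\varphi(b)+\xi(y)\bigr),
\]
which must hold for all $a,b\in A$ and $x,y\in X$.

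First I would eliminate the $X^*$-component. Substituting $a=b=0$ gives $(0,x)(0,y)=(0,0)$, so the identity collapses to $\xi(x)\xi(y)=0$ for all $x,y\in X$; taking $x=y$ then forces $\xi=0$. Hence $\Phi$ factors through the first coordinate, $\Phi(a,x)=\varphi(a)$. Next, with $\xi=0$ and $x=y=0$ the identity becomes $\varphi(ab)=\varphi(a)\varphi(b)$, so $\varphi$ is multiplicative; and $\varphi\neq 0$ because $\Phi\neq 0$. Thus $\varphi\in\triangle(A)$ and $\Phi=(\varphi,0)$. Conversely, for any $\varphi\in\triangle(A)$ the functional $(a,x)\mapsto\varphi(a)$ is plainly a non-zero character on $A\oplus X$, since the $X$-part of every product is annihilated. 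This establishes $\triangle(A\oplus X)=\triangle(A)\times\{0\}$.

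For the triangular algebra $\tau$ I would simply invoke the identification, recalled just before the statement, of $\tau$ with the module extension $(A\oplus_1 B)\oplus X$, where $X$ carries the $A\oplus_1 B$-actions $(a,b)\cdot x=ax$ and $x\cdot(a,b)=xb$. Applying the general formula with $A\oplus_1 B$ in place of $A$ yields $\triangle(\tau)=\triangle(A\oplus_1 B)\times\{0\}$.

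I do not anticipate a genuine obstacle here, as the whole proposition is a direct computation with the product formula. The only point deserving a moment's care is the implication $\xi(x)\xi(y)=0\Rightarrow\xi=0$, which rests on evaluating at $x=y$ and uses no structural hypothesis on the module $X$ whatsoever.
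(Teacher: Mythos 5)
Your proof is correct and takes essentially the same approach as the paper: both kill the $X^*$-component by testing multiplicativity on $(0,x)(0,y)=(0,0)$ (giving $\xi(x)\xi(y)=0$, hence $\xi=0$), both obtain multiplicativity of $\varphi$ from products of the form $(a,0)(b,0)$, and both settle the triangular case by the identification $\tau\cong(A\oplus_1 B)\oplus X$. The only difference is cosmetic: you spell out the reverse inclusion and the nonvanishing of $\varphi$, which the paper dismisses as trivial.
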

\begin{proof}
Trivially $\bigtriangleup(A)\times\{0\}\subseteq\bigtriangleup
(A \oplus X).$ Let   $(\varphi,\psi)\in
\bigtriangleup (A\oplus X)$. So for $a,b\in
A$, $(\varphi,\psi)((a,0)(b,0))=(\varphi,\psi)((a,0))(\varphi,\psi)((b,0))$. It follows that
$\varphi(ab)=\varphi(a)\varphi(b)$. Also for $x,y\in X$,
$0=(\varphi,\psi)((0,x)(0,y))=(\varphi,\psi)((0,x))(\varphi,\psi)((0,y))=\psi(x)\psi(y)$. So $\psi=0$
and finally $\varphi\in \bigtriangleup (A)$. Hence $\bigtriangleup
(A\oplus X)=\bigtriangleup(A)\times\{0\}$. The second part is clear.
\end{proof}
The next result which studies  the character amenability of $A\oplus X$ and $\tau$ is a direct application of {\cite[Theorem 1.4]{KLP}} for  the module extension  $A\oplus X.$
\begin{proposition}\label{C}
Let $A$ be a Banach algebra,  $X$ be  a Banach $A$-module  and let $\varphi\in
\bigtriangleup(A)$. Then $A\oplus X$ is $(\varphi,0)$-amenable if
and only if there exists a bounded net $(a_\alpha,x_\alpha)$
   \ in $A\oplus X$ satisfying

(i)$\|aa_\alpha -\varphi(a)a_\alpha\|\rightarrow 0$ for all $a\in A$ and
$\varphi(a_\alpha)=1$ for all $\alpha$,

(ii)$\|ax_\alpha -\varphi(a)x_\alpha\|\rightarrow 0$ for all $a\in A$ and

(iii) $\|xa_\alpha\| \rightarrow 0$, for all $x\in X$.
\end{proposition}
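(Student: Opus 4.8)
The plan is to apply the net characterisation of $\varphi$-amenability from {\cite[Theorem 1.4]{KLP}} directly to the module extension $A\oplus X$, taking the character to be $(\varphi,0)\in\triangle(A\oplus X)$ supplied by the previous proposition. That theorem asserts that $A\oplus X$ is $(\varphi,0)$-amenable if and only if there is a bounded net $(a_\alpha,x_\alpha)$ in $A\oplus X$ with $(\varphi,0)(a_\alpha,x_\alpha)=1$ for all $\alpha$ and
$$\|(a,x)(a_\alpha,x_\alpha)-(\varphi,0)(a,x)\,(a_\alpha,x_\alpha)\|\to 0 \qquad ((a,x)\in A\oplus X).$$
Thus the whole argument reduces to unravelling this single condition into the three listed ones.

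Next I would compute the two ingredients explicitly. Since the module extension product is $(a,x)(a_\alpha,x_\alpha)=(aa_\alpha,\,ax_\alpha+xa_\alpha)$ and $(\varphi,0)(a,x)=\varphi(a)$, the normalisation $(\varphi,0)(a_\alpha,x_\alpha)=1$ is exactly $\varphi(a_\alpha)=1$. Using the norm $\|(b,y)\|=\|b\|+\|y\|$, the displayed limit becomes
$$\|aa_\alpha-\varphi(a)a_\alpha\|+\|ax_\alpha+xa_\alpha-\varphi(a)x_\alpha\|\to 0 \qquad ((a,x)\in A\oplus X).$$
Boundedness of $(a_\alpha,x_\alpha)$ in $A\oplus X$ is equivalent to simultaneous boundedness of $(a_\alpha)$ in $A$ and $(x_\alpha)$ in $X$, so the net produced by {\cite[Theorem 1.4]{KLP}} and the net in the statement are the same object.

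Finally I would split the combined condition into (i)--(iii). The first summand yields (i) at once. For the second summand, specialising to $x=0$ gives $\|ax_\alpha-\varphi(a)x_\alpha\|\to 0$, which is (ii), while specialising to $a=0$ (so that $\varphi(a)=0$) gives $\|xa_\alpha\|\to 0$, which is (iii); this settles the forward implication. For the converse, assuming (i)--(iii), the triangle inequality gives
$$\|ax_\alpha+xa_\alpha-\varphi(a)x_\alpha\|\le\|ax_\alpha-\varphi(a)x_\alpha\|+\|xa_\alpha\|\to 0,$$
so the combined condition holds and {\cite[Theorem 1.4]{KLP}} delivers $(\varphi,0)$-amenability. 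There is no serious obstacle here; the only point needing a little care is the bookkeeping verifying that the single vector-valued condition ranging over all pairs $(a,x)$ is genuinely equivalent to the three scalar-indexed conditions, namely that zeroing one coordinate loses no information in the forward direction and that the triangle inequality suffices in the reverse.
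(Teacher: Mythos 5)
Your proof is correct and takes precisely the route the paper intends: the paper in fact offers no written-out proof of this proposition, stating only that it is ``a direct application of \cite[Theorem 1.4]{KLP} for the module extension $A\oplus X$,'' and your argument is exactly that application carried out in detail. Your computation of the product $(a,x)(a_\alpha,x_\alpha)=(aa_\alpha,\,ax_\alpha+xa_\alpha)$, the norm splitting, the specialisations $x=0$ and $a=0$ for the forward direction, and the triangle inequality for the converse are all correct and complete.
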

\begin{corollary}
(i) If $A\oplus X$ is character amenable then  so is  $A$.
 The converse also holds in the case where  $XA=0$.

(ii) If $\tau$ is character amenable then both $A$ and $B$ are character amenable. The converse also holds in the case where  $XB=0$.
\end{corollary}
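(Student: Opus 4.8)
The plan is to read both statements off Proposition \ref{C}, using the fact that every character of a module extension is supported on the first coordinate. Recall that the preceding proposition gives $\triangle(A\oplus X)=\triangle(A)\times\{0\}$, so $A\oplus X$ is character amenable exactly when it is $(\varphi,0)$-amenable for every $\varphi\in\triangle(A)$; and by \cite[Theorem 1.4]{KLP}, $A$ is character amenable exactly when, for every $\varphi\in\triangle(A)$, there is a bounded net $(a_\alpha)$ in $A$ with $\varphi(a_\alpha)=1$ and $\|aa_\alpha-\varphi(a)a_\alpha\|\to 0$ for all $a\in A$---which is precisely condition (i) of Proposition \ref{C}. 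Thus the two notions are linked through that single condition, and the whole argument amounts to controlling conditions (ii) and (iii).

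For part (i), the forward implication is immediate: if $A\oplus X$ is $(\varphi,0)$-amenable, Proposition \ref{C} supplies a bounded net $(a_\alpha,x_\alpha)$ satisfying (i)--(iii), and its first coordinate $(a_\alpha)$ is exactly a net witnessing the $\varphi$-amenability of $A$; letting $\varphi$ range over $\triangle(A)$ gives character amenability of $A$. For the converse under the hypothesis $XA=0$, I would start from a net $(a_\alpha)$ witnessing $\varphi$-amenability of $A$ and set $x_\alpha=0$. Then condition (i) holds by assumption, condition (ii) holds trivially since $ax_\alpha-\varphi(a)x_\alpha=0$, and condition (iii) holds because $XA=0$ forces $xa_\alpha=0$ for all $x\in X$. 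Proposition \ref{C} then yields $(\varphi,0)$-amenability of $A\oplus X$ for every $\varphi$, hence character amenability.

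For part (ii), I would exploit the identification $\tau=(A\oplus_1 B)\oplus X$ described above, in which $X$ is an $(A\oplus_1 B)$-module via $(a,b)\cdot x=ax$ and $x\cdot(a,b)=xb$. Applying part (i) with $A\oplus_1 B$ in the role of ``$A$'': if $\tau$ is character amenable then $A\oplus_1 B$ is character amenable, and by the standard description of character amenability for $\ell^1$-direct sums (which may be verified exactly as in the character-inner-amenable case treated earlier) this forces both $A$ and $B$ to be character amenable. For the converse, observe that the right action satisfies $x\cdot(a,b)=xb$, so the vanishing condition $X(A\oplus_1 B)=0$ is equivalent to $XB=0$; under this hypothesis, if $A$ and $B$ are character amenable then so is $A\oplus_1 B$, and the converse half of part (i) promotes this to character amenability of $\tau=(A\oplus_1 B)\oplus X$.

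The routine steps are the module-action bookkeeping; the point that actually carries the argument is the converse direction, where everything hinges on the vanishing-action hypothesis. Condition (iii) of Proposition \ref{C} is the only obstruction to lifting a witnessing net from $A$ (resp.\ from $A\oplus_1 B$) to the module extension, and $XA=0$ (resp.\ $XB=0$) is exactly what annihilates it; without such a hypothesis there is no reason a net for the base algebra extends, which is why the converse must be stated conditionally. The one external input I rely on is the character-amenability analogue of the $\ell^1$-direct-sum result, so I would either cite it or append a short direct verification running parallel to the character-inner-amenable version.
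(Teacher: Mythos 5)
Your proof is correct and follows exactly the route the paper intends: the corollary is stated there without proof, as an immediate consequence of Proposition \ref{C} together with the identifications $\triangle(A\oplus X)=\triangle(A)\times\{0\}$ and $\tau=(A\oplus_1 B)\oplus X$, which is precisely how you argue. Your explicit handling of the details the paper leaves to the reader---taking $x_\alpha=0$ so that only condition (iii) of Proposition \ref{C} needs the hypothesis $XA=0$ (resp.\ $X(A\oplus_1 B)=XB=0$), and invoking the character-amenability analogue of the $\ell^1$-direct-sum result, which is indeed a routine verification---is accurate and complete.
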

Similar to Proposition \ref{C} we have the next result, which is based on Proposition \ref{K},   characterizing the character inner amenability of $A\oplus X.$
\begin{proposition}
Let $A$ be a Banach algebra,  $X$ be a Banach $A$-module  and let $\varphi\in
\bigtriangleup(A)$. Then $A\oplus X$ is $(\varphi,0)$-inner amenable
if and only if there exists a bounded net
$(a_\alpha,x_\alpha)$ in $A\oplus X$ satisfying

(i) $\|aa_\alpha-a_\alpha a\|\rightarrow 0$  for all $a\in A$ and
$\varphi(a_\alpha)=1$ for all $\alpha$,

(ii) $\|xa_\alpha-a_\alpha x\|\rightarrow 0$ for all $x\in X$,   and

(iii) $\|ax_\alpha-x_\alpha a\|\rightarrow 0$ for all $a\in A$.
\end{proposition}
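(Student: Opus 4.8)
The plan is to apply the net characterization of $\varphi$-inner amenability (Proposition \ref{K}) directly to the algebra $A\oplus X$ at the character $(\varphi,0)$, and to unpack the resulting single commutator condition in $A\oplus X$ into the three conditions (i)--(iii). The mechanism is that the module-extension norm $\|(a,x)\|=\|a\|+\|x\|$ is of $\ell^1$-type, so norm convergence of a net of pairs is equivalent to coordinatewise norm convergence; the first coordinate of the commutator will produce (i), while the second ($X$-)coordinate will carry the two module-commutator terms producing (ii) and (iii). Recall also that $(\varphi,0)((a_\alpha,x_\alpha))=\varphi(a_\alpha)$.

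First I would record the relevant commutator. For $(b,y)\in A\oplus X$ and a net $(a_\alpha,x_\alpha)$, a direct computation with the product $(a,x)(b,y)=(ab,ay+xb)$ gives
$$(b,y)(a_\alpha,x_\alpha)-(a_\alpha,x_\alpha)(b,y)=\big(ba_\alpha-a_\alpha b,\ (bx_\alpha-x_\alpha b)+(ya_\alpha-a_\alpha y)\big).$$
For the forward implication, suppose $A\oplus X$ is $(\varphi,0)$-inner amenable. By Proposition \ref{K} there is a bounded net $(a_\alpha,x_\alpha)$ with $\varphi(a_\alpha)=1$ for all $\alpha$ and the displayed commutator tending to $0$ for every $(b,y)$. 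Testing on elements of the form $(b,0)$ and using that the norm is the sum of the coordinate norms yields $\|ba_\alpha-a_\alpha b\|\to 0$, which together with $\varphi(a_\alpha)=1$ gives (i), and $\|bx_\alpha-x_\alpha b\|\to 0$, which after renaming $b$ as $a$ gives (iii). Testing on elements of the form $(0,y)$ annihilates the first coordinate and leaves $\|ya_\alpha-a_\alpha y\|\to 0$, which is (ii).

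For the converse, given a bounded net $(a_\alpha,x_\alpha)$ satisfying (i)--(iii), I would verify the hypotheses of Proposition \ref{K} for this same net regarded in $A\oplus X$: it is bounded, $(\varphi,0)((a_\alpha,x_\alpha))=\varphi(a_\alpha)=1$ by (i), and the triangle inequality applied to the displayed commutator gives
$$\|(b,y)(a_\alpha,x_\alpha)-(a_\alpha,x_\alpha)(b,y)\|\leq\|ba_\alpha-a_\alpha b\|+\|bx_\alpha-x_\alpha b\|+\|ya_\alpha-a_\alpha y\|,$$
with the three summands tending to $0$ by (i), (iii), and (ii) respectively. Hence the commutator condition of Proposition \ref{K} holds, so $A\oplus X$ is $(\varphi,0)$-inner amenable.

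There is no genuine obstacle here; the entire content is the bookkeeping of which module action lands in which coordinate of the commutator. The only point requiring minor care is that the two mixed terms $bx_\alpha-x_\alpha b$ and $ya_\alpha-a_\alpha y$ occupy the \emph{same} $X$-coordinate of the commutator. Consequently the converse direction must invoke the triangle inequality to separate their contributions, whereas the forward direction separates them for free by testing the commutator condition on the two families $(b,0)$ and $(0,y)$ rather than on a general element $(b,y)$.
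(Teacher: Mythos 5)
Your proof is correct and follows exactly the route the paper intends: the paper gives no explicit proof, stating only that the result follows from Proposition \ref{K} in the same way Proposition \ref{C} follows from the corresponding characterization of $\varphi$-amenability, and your argument is precisely that application, with the commutator computation and the coordinate bookkeeping carried out in full. The only substantive points---the $\ell^1$-norm splitting the commutator coordinates in the forward direction, and the triangle inequality recombining (i)--(iii) in the converse---are handled correctly.
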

\begin{corollary}
 If $A\oplus X$ is character inner amenable then $A$ is character inner amenable.
 In particular if $\tau$ is character inner amenable then both $A$ and $B$ are character inner amenable.
\end{corollary}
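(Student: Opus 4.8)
The plan is to read the conclusion directly off the characterization in the preceding Proposition, by projecting the relevant net onto its first coordinate. First, recall that $\triangle(A\oplus X)=\triangle(A)\times\{0\}$, so to say that $A\oplus X$ is character inner amenable is exactly to say that it is $(\varphi,0)$-inner amenable for every $\varphi\in\triangle(A)$. Fix such a $\varphi$. By the preceding Proposition there is a bounded net $(a_\alpha,x_\alpha)$ in $A\oplus X$ satisfying conditions (i)--(iii); in particular condition (i) reads $\|aa_\alpha-a_\alpha a\|\rightarrow 0$ for all $a\in A$, together with $\varphi(a_\alpha)=1$ for all $\alpha$.

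The key observation is that the first-coordinate net $(a_\alpha)$ is already the witness needed for $A$. Since $\|a_\alpha\|\leq\|a_\alpha\|+\|x_\alpha\|=\|(a_\alpha,x_\alpha)\|$, the net $(a_\alpha)$ is bounded in $A$, and condition (i) above is precisely condition (ii) of Proposition \ref{K}. Hence $A$ is $\varphi$-inner amenable. As $\varphi\in\triangle(A)$ was arbitrary, $A$ is character inner amenable. I expect no genuine obstacle here: conditions (ii) and (iii) of the preceding Proposition, which involve $X$, are simply discarded, and the argument reduces to an immediate projection.

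For the triangular algebra $\tau$ the plan is to combine this with the two structural facts already recorded in the text. One identifies $\tau$ with the module extension $(A\oplus_1 B)\oplus X$, with $X$ regarded as an $A\oplus_1 B$-module via $(a,b)\cdot x=ax$ and $x\cdot(a,b)=xb$. Applying the first part of the corollary with $A$ replaced by $A\oplus_1 B$ shows that character inner amenability of $\tau$ forces character inner amenability of $A\oplus_1 B$. Finally, by the Proposition characterizing character inner amenability of $A\oplus_p B$, taken in the case $p=1$, the algebra $A\oplus_1 B$ is character inner amenable if and only if both $A$ and $B$ are. Chaining these implications yields that both $A$ and $B$ are character inner amenable. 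The only point requiring a little care in this half is the bookkeeping of the identification $\tau\cong(A\oplus_1 B)\oplus X$ and checking that the module operations are the intended ones, so that the first part of the corollary genuinely applies; once that is in place, the chain of equivalences is automatic.
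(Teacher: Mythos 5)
Your proposal is correct and is exactly the argument the paper intends: the corollary is stated without proof as an immediate consequence of the preceding Proposition, obtained by discarding the $X$-conditions and noting that condition (i) is precisely condition (ii) of Proposition \ref{K}, while the triangular case follows from the identification $\tau\cong(A\oplus_1 B)\oplus X$ together with the Proposition on $A\oplus_p B$. No discrepancies with the paper's approach.
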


\end{document}